\documentclass[A4paper,20pt]{amsart}
\usepackage{graphicx} 
\usepackage[all,poly]{xy}
\usepackage{amsmath,amsthm,amsfonts,latexsym,amssymb, mathrsfs,stmaryrd,graphics,pst-all,tkz-euclide,yfonts,mathtools,tikz,graphicx,wrapfig}
\usepackage[utf8]{inputenc}
\usepackage{tikz-cd}
\usetikzlibrary{positioning}
\usepackage{fullpage}
\usepackage{color}
\usepackage[pagebackref,colorlinks]{hyperref} 
\usepackage{enumerate}
\scrollmode

\theoremstyle{plain}

\newtheorem{theorem}{Theorem}[section]
\newtheorem{lemma}[theorem]{Lemma}
\newtheorem{corollary}[theorem]{Corollary}
\newtheorem{proposition}[theorem]{Proposition}

\theoremstyle{definition}
\newtheorem{definition}[theorem]{Definition}

\newtheorem{remark}[theorem]{Remark}
\newtheorem{remarks and examples}[theorem]{Remarks and Examples}

\title{IT0 sheaves on compactified Jacobian}
\author{Pabitra Barik,Anindya Mukherjee}
\date{March 2026}

\begin{document}

\maketitle
\begin{abstract}
In this paper we construct stable $IT_{0}$ trosion free sheaves on compactified Jacobian of an integral nodal curve. Our construction is functorial in nature and it provides an uniform method which works for both smooth and nodal curve.
\end{abstract}

\section{Introduction}
Let $X$ be an abelian variety of dimension $g$ over algebrically closed field $k$ and denote $\hat{X}$ be the dual of $X$. Let $P$ be Poincare bundle on $X \times \hat{X}$ and for any coherent sheaf consider the Fourier-Mukai functor,\\
\begin{align*}
 \Phi_{P}: & D^{b}(X) \rightarrow D^{b}(\hat{X}) \\
           & F \rightarrow Rp_{2,*}(p_{1}^{*}F \otimes P)
\end{align*}

where $p_1$ and $p_2$ are two projections. Now let $C$ be smooth projective curve, $J(C)$ be the Jacobian and let $a:C \rightarrow J(C)$ be the usual Abel-Jacobi embedding. In \cite{Popa} $IT_{0}$ bundle on Jacobian has been constructed which arise using Fourier-Mukai functor from line bundles on curve. This $IT_{0}$ bundles satisfy Mukai regularity on abelian varieties  which is a stronger version of Mumford regularity. In this paper we construct stable $IT_{0}$ torsion free sheaves on Compactified Jacobian  which arise functorially from stable torsion free sheaves on integral nodal curve with one node. We expect that the existence of such stable torsion free sheaves of fixed rank may be helpful to study moduli of such objects.

\section{preliminaries}
Let $X$ be an integral nodal curve with one node. We denote the corresponding compactified jacobian $\overline {J(X)}$ by $A$. Following \cite{Coelho} we define the Abel-Jacobi embedding,\\

\subsection{\textbf{The Abel-Jacobi embedding}}
Fix  $L$ be a line bundle of degree $1$ on $X$. Then the Abel-Jacobi map is,\\
\begin{align*}
   a :& X \rightarrow \overline J(X),\\
      & p \rightarrow I_{p} \otimes L
\end{align*}

we identify the image of the nodal curve $X$ inside the compactified Jacobian $\overline {J(X)}$ via the closed immersion.

\subsection{\textbf{Fourier-Mukai transform on compactified Jacobian}}
In this section we define an analogue of Fourier-Mukai transform for compactified jacobian following \cite{Dima}.\\
Let $p_1:\overline J \times \overline J \rightarrow \overline J$ and $p_2:\overline J \times \overline J \rightarrow \overline J$ are two projections. We have the existence of universal Poincare sheaf $\overline P$ on $\overline J \times \overline J$\cite[Theorem A]{Dima}. So define the Fourier-Mukai functor \\
$\phi_{\overline P}:D^{b}(A) \rightarrow D^{b}(A)$ by $\phi_{\overline P}(F):=Rp_{1*}(p_{2}^{*}F \otimes \overline P)$. In \cite[Theorem C]{Dima} it is shown that $\phi_{\overline P}$ is a categorical equivalence.
\subsection{\textbf{WIT and IT conditions}}
Let $F$ be a coherent sheaf on $A$.
\begin{definition}
We say that $F$ satisfies $WIT_{i}$ if $R^{j}\Phi_{\overline P}(F)=0$ for all $j \neq i$.
\end{definition}
if $F$ satisfies $WIT_{i}$, we denote\\
\begin{equation*}
   \hat{F}=R^{i}\phi_{\overline P}(F)
\end{equation*}

\begin{definition}
    A coherent sheaf $F$ on $A$ satisfies $IT_{0}$ if \\
    \begin{equation*}
        H^{i}(A, F \otimes \alpha)=0, \forall \alpha \in \overline {Pic^{0}(\overline J)}
    \end{equation*}
    $IT_{0}$ is stronger than $WIT_{0}$.
\end{definition}
Now we define the Fourier-Mukai base change analogous to the smooth case. We have that the Poincare sheaf $\overline P$ is flat for the second projection $p_{2}:\overline J \times \overline J \rightarrow \overline J$ and also we have $\overline P|_{\overline J \times \alpha}$ is Cohen-Maculay see\cite{Dima}. Thus like smooth case we have also the derived basechange for Fourier-Mukai in this case.
\subsection{ Base change for Fourier-Mukai}
Let $F$ be a coherent sheaf on $A$. For $\alpha \in \overline{Pic^{0}(\overline(J))}$,derived base change gives,\\
\begin{equation*}
    \Phi_{\overline P}(F) \otimes^{L} k(\alpha) \cong R\Gamma(A, F \otimes \overline P_{\alpha})
\end{equation*}

This identity is crucial and will be used repeatedly to compute Fourier-Mukai transforms.
\subsection{Stability}
\begin{enumerate}
    \item Let $(X,H)$ be a projective variety of dimension $n$ with natural polarization $H$. For a torsion-free sheaf $F$ on $X$, the slope with respect to $H$ is,\\
\begin{equation*}
    \mu_{H}(F):= \frac{c_{1}(F).H^{n-1}}{rk(F)}
\end{equation*}
Let $F$ be a torsion free sheaf of rank $r$. Then $F$ is called slope stable with respect to $H$, if for all proper subsheaf $E$ of $F$ with $0<rk(E)<rk(F)$ we have $\mu_{H}(E)< \mu_{H}(F)$.

\item Let $X$ be a projective variety. A coherent sheaf $E$ of dimension $d$ is Gieseker-stable if $E$ is pure and for every proper subsheaf $F \subset E$ one has $p(F)< p(E)$. Here $p(E)$ is the reduced Hillbert polynomial of $E$.
\end{enumerate}

\section{Cohomological Vanishing and $WIT_{0}$ property}
In this section we analyze the Fourier-Mukai transforms of the push-forward of $a_{*}V$ ,where $V$ is a semistable torsion free sheaf $V$ on $X$. The goal is to establish uniform cohomological-vanishing on $X$ and derive the $WIT_{0}$ property.
\subsection{slope positivity and uniform $H^{1}$ vanishing}

Let $V$ be a torsion free sheaf on $X$. We say that $V$ is semistable if\\
\begin{equation*}
    \mu(W) \leq \mu(V) 
\end{equation*}
for all proper torsion free subsheaves $W \subset V$.
we assume throughout the section that,\\
\begin{equation*}
    \mu(V) > 2g-2
\end{equation*}
where $g=p_{a}(X)$
\begin{lemma}
If $V$ is semistable and $\mu(V)>2g-2$, then\\
\begin{equation*}
    H^{1}(X, V \otimes M)=0
\end{equation*}
for all $M \in \overline {Pic^{0}(X)}$
\end{lemma}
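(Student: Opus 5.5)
Since $X$ is an integral projective curve with at worst a node, it is Gorenstein. The plan is therefore Serre duality with the dualizing sheaf $\omega_X$, which is a line bundle of degree $2g-2$, followed by a slope argument. For a torsion free sheaf $F$ on $X$, Grothendieck–Serre duality on the Gorenstein curve gives
\begin{equation*}
H^{1}(X,F)^{\vee}\cong \operatorname{Hom}(F,\omega_X).
\end{equation*}
With $F=V\otimes M$, the lemma reduces to showing that $\operatorname{Hom}(V\otimes M,\omega_X)=0$ for every $M\in\overline{Pic^{0}(X)}$, that is, for every rank one torsion free sheaf of degree $0$.

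The first step is to check that tensoring with $M$ preserves the relevant slope data. If $M$ is a line bundle, $V\otimes M$ is torsion free and semistable with $\mu(V\otimes M)=\mu(V)>2g-2$, since subsheaves correspond bijectively under tensoring with a line bundle. If $M$ is not locally free, then $M=I_q\otimes N$, where $q$ is the node and $N$ is a line bundle of degree $1$, because the only non-invertible rank one torsion free sheaves on a one-nodal curve are of this form. In that case I will not work with $V\otimes M$ directly, since it may have torsion at the node. I will use instead the inclusion $I_q\subset\mathcal O_X$, which gives a map $V\otimes M\to V\otimes N$ that is an isomorphism away from $q$. Any morphism $V\otimes M\to\omega_X$ factors through the torsion free quotient $(V\otimes M)/\mathrm{tors}$, since $\omega_X$ is torsion free. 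That quotient injects into $V\otimes N$ with cokernel supported at $q$, and its degree is at least $\deg(V\otimes N)-r\cdot\mathrm{length}$. This step, controlling the degree drop at the node, is the main obstacle. The cleanest way around it is to prove the statement in the form $\operatorname{Hom}(V,\mathcal{H}om(M,\omega_X))=0$ instead. Here $\mathcal{H}om(M,\omega_X)$ is rank one torsion free of degree $2g-2-\deg M=2g-2$ by Gorenstein duality on rank one torsion free sheaves, and this holds whether or not $M$ is locally free. By adjunction, $\operatorname{Hom}(V\otimes M,\omega_X)=\operatorname{Hom}(V,\mathcal{H}om(M,\omega_X))$, so this formulation is equivalent.

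The final step is the standard slope argument. Let $\phi:V\to L'$ be a nonzero map with $L'=\mathcal{H}om(M,\omega_X)$ rank one torsion free of degree $2g-2$. Its image $Q$ is a nonzero subsheaf of $L'$, hence rank one torsion free with $\deg Q\le \deg L'=2g-2$. Here I use that the quotient $L'/Q$ is torsion, so its length is nonnegative. Semistability of $V$ gives $\mu(Q)\ge\mu(V)$ for torsion free quotients. This is equivalent to the subsheaf condition applied to the saturated kernel, which is torsion free of smaller rank. If $\ker\phi=0$, then $V$ has rank one and $\deg V\le 2g-2$ directly. In either case $\mu(V)\le 2g-2$, contradicting $\mu(V)>2g-2$. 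Hence the Hom vanishes, and therefore $H^1(X,V\otimes M)=0$. Besides the duality step, the degree and additivity facts needed on the singular curve are that degree is additive in short exact sequences, via $\chi$ and Riemann–Roch $\chi(F)=\deg F+\operatorname{rk}F(1-g)$, and that torsion sheaves have nonnegative degree.
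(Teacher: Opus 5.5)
Your proof is correct. It departs from the paper's proof at exactly the step that needs care.

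The paper asserts that $V\otimes M$ is again semistable with $\mu(V\otimes M)=\mu(V)$, and then compares it with the degree $2g-2$ line bundle $K$. You instead use adjunction, $\operatorname{Hom}(V\otimes M,\omega_X)\cong\operatorname{Hom}(V,\mathcal{H}om(M,\omega_X))$. This way semistability is only applied to $V$ itself, and the twist is absorbed into a rank one torsion free target of degree $2g-2$.

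This buys genuine robustness. When $M$ is not invertible and $V$ is not locally free at the node, $V\otimes M$ has torsion at $q$, and its torsion free quotient can have strictly smaller slope. For example, let $\nu$ be the normalization, $V=\nu_*A$ and $M=\nu_*B$. Then the torsion free part of $V\otimes M$ is $\nu_*(A\otimes B)$, of degree $\deg V+\deg M-1$. For $\deg V=2g-1$ this is exactly $2g-2$, so a bare slope comparison with $K$ no longer concludes. Hence the paper's one-line claim is not justified in that case, whereas your argument covers every $M\in\overline{Pic^{0}(X)}$ uniformly.

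The paper's route is shorter and fine for invertible $M$. Yours costs the Gorenstein duality fact that $\mathcal{H}om(M,\omega_X)$ is rank one torsion free of degree $2g-2-\deg M$. This follows, e.g., from $\mathcal{E}xt^1(M,\omega_X)=0$ and $\chi(\mathcal{H}om(M,\omega_X))=-\chi(M)$. Your version also makes explicit the image and saturated-kernel argument that the paper compresses into ``semistability implies''.

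The detour through $(V\otimes M)/\mathrm{tors}$ and the description $M=I_q\otimes N$ plays no role in your final argument and can be dropped.
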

\begin{proof}
For $M \in \overline {Pic^{0}X}$, the sheaf $V \otimes M$ is still semistable and satisfies\\
\begin{equation*}
    \mu(V \otimes M)=\mu(V)
\end{equation*}
Here as $X$ is a singular curve with planner singularity it is local complete intersection in some projective space. Thus the cannonical sheaf is invertible. So using Serre duality we have,\\
\begin{equation*}
    H^{1}(X, V \otimes M) \cong Hom(V \otimes M, K)^{\vee}
\end{equation*}
Using Reiman-Roch theorem similarly we can compute that $K$ is an invertible sheaf with slope $2g-2$. As we have $\mu(V) >2g-2$, semistability implies\\
\begin{equation*}
    Hom(V \otimes M,K)=0
\end{equation*}
The claim follows.
\end{proof}
\begin{remark}
Note that the slope inequality $\mu(V) >2g-2$ is sharp for this argument. It ensures that $V$ admits no non-zero morphism to $K$, which is precisely the obstruction of $H^{1}$.
\end{remark}
\subsection{Derived fibers and $WIT_{0}$}
Let\\
\begin{equation*}
    F'=R^{i}\phi_{\overline P}(F)
\end{equation*}
 We now compute the derived fiber of $E$ at a point $\alpha \in \overline {(Pic^{0}(A)}$. By derived base change of Fourier-Mukai transform,\\
 \begin{equation*}
     E \otimes^{L} k(\alpha) \cong R\Gamma(A,a_{*}V \otimes \overline P_{\alpha})
 \end{equation*}
 using projection formula we obtain,\\
 \begin{equation*}
     E \otimes^{L} k(\alpha) \cong R\Gamma(X, V \otimes a^{*}P_{\alpha}) 
 \end{equation*} 
 now since $a^{*}(\overline P_{\alpha})$ is in the degree zero component so from the uniform vanishing lemma we get,\\
 \begin{equation*}
     H^{1}(C, V \otimes a^{*}\overline{P}_{\alpha})=0 
 \end{equation*}
 this holds for all $\alpha \in \overline {Pic^{0}A}$. Thus the derived fiber is concentrated in degree zero.
 \begin{proposition}
     Let $V$ be a torsion free sheaf on $X$ with $\mu(V) >2g-2$. Then:\\
     \begin{enumerate}
         \item $a_{*}V$ satisfies $WIT_{0}$ with respect to $\Phi_{\overline P}$;
         \item $E$ is torsion free with constant generic rank on $A$.
         \item $rk(E)=\chi(V)$
         
     \end{enumerate}
     \end{proposition}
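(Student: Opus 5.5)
We take $E:=\Phi_{\overline P}(a_{*}V)$, the object whose derived fibers were computed just before the statement. All three claims will come from the derived fiber computation
\begin{equation*}
E\otimes^{L}k(\alpha)\cong R\Gamma(X,V\otimes a^{*}\overline P_{\alpha}),
\end{equation*}
combined with the uniform vanishing lemma. Throughout we also implicitly assume $V$ is semistable, as the lemma requires.

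\textbf{WIT$_0$.} First, $E$ is a bounded complex on $A$ with coherent cohomology. The lemma gives $H^{1}(X,V\otimes a^{*}\overline P_{\alpha})=0$ for every $\alpha$, and $X$ is a curve, so every derived fiber $E\otimes^{L}k(\alpha)$ is concentrated in degree $0$. We then apply the standard Nakayama-type argument. Let $q$ be the largest index with $\mathcal H^{q}(E)\neq 0$, and pick $\alpha$ in its support. Then $H^{q}(E\otimes^{L}k(\alpha))=\mathcal H^{q}(E)\otimes k(\alpha)\neq 0$, which forces $q=0$.

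Next we show $E$ is a sheaf in degree $0$. Since $E\otimes^{L}k(\alpha)$ lives in degree $0$ for all closed points $\alpha$, the complex $E$ has Tor-amplitude in $[0,0]$. It is therefore quasi-isomorphic to a locally free sheaf $\hat{F}=\mathcal H^{0}(E)$, and in particular $R^{j}\Phi_{\overline P}(a_{*}V)=0$ for $j\neq 0$. This proves (1).

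To make the Tor-amplitude step rigorous, we resolve $E$ locally by a bounded complex of free modules over the local ring $\mathcal O_{A,\alpha}$ (we have $E\in D^{b}$ and $A$ is projective). We then use the standard lemma: if $E\otimes^{L}k(\alpha)$ has cohomology only in degree $0$, then $E$ is perfect with amplitude $[0,0]$ near $\alpha$. This requires $E$ to be perfect, i.e. of finite Tor-dimension. We would get perfectness from the fact that $\overline P$ is flat over the second factor, so that the transform of a coherent sheaf is computed by a bounded complex with Tor-dimension controlled by the relative dimension. Alternatively, one can use $\Phi_{\overline P}$ being an equivalence \cite[Theorem C]{Dima} to see that the transform lands in $D^{b}$ with this finiteness.

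\textbf{Local freeness and rank.} Once $\hat{F}$ is locally free, it is in particular torsion free on the integral scheme $A$, and its rank at every point is
\begin{equation*}
\dim H^{0}(E\otimes^{L}k(\alpha))=h^{0}(X,V\otimes a^{*}\overline P_{\alpha})=\chi(V\otimes a^{*}\overline P_{\alpha}),
\end{equation*}
using the $H^{1}$ vanishing. We then observe that $a^{*}\overline P_{\alpha}$ is a rank one torsion-free sheaf of degree $0$ on $X$; this rests on the fact, from \cite{Dima,Coelho}, that the Abel–Jacobi map identifies $\overline{J}$ with its own dual.

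By Riemann–Roch on the integral curve $X$,
\begin{equation*}
\chi(V\otimes a^{*}\overline P_{\alpha})=\deg(V)+rk(V)(1-g)=\chi(V),
\end{equation*}
which is constant in $\alpha$. Local constancy also follows directly from flatness of $\overline P$ over the second factor. This gives (2) and (3).

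\textbf{Main obstacle.} The delicate point is the behaviour at the boundary points $\alpha$ corresponding to non-locally-free sheaves. There we need two things. First, that $V\otimes a^{*}\overline P_{\alpha}$ (modulo torsion) is still semistable of slope $\mu(V)$, so the lemma applies; the lemma is stated for all of $\overline{Pic^{0}}$, so we will invoke it. Second, that tensoring with a non-invertible rank one sheaf does not change $\chi$. For this we would either compute $\chi$ via flatness of the family $a^{*}\overline P$ over $A$ together with the invariance of Euler characteristic in flat families, or argue at a general (line bundle) point and extend by constancy of fiber dimension.
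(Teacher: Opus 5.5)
The gap is your upgrade from $WIT_0$ to ``$E$ is locally free with fibre dimension $\chi(V)$ at every point''. Let $J\subset A$ be the open locus of line bundles and $p$ the node.

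\emph{Why the argument fails.} At a boundary point $\alpha\in A\setminus J$ the scheme $A$ is singular, so $k(\alpha)$ has infinite Tor-dimension. The honest fibre there is $R\Gamma(A,a_*V\otimes^{L}\overline P_\alpha)$. This differs from $R\Gamma(X,V\otimes a^{*}\overline P_\alpha)$ by the higher $\mathcal{T}or$'s of $a_*V$ and $\overline P_\alpha$ at $a(p)$. So degree-$0$ concentration is unjustified at such $\alpha$. Neither perfectness argument helps. Flatness of $\overline P$ over a factor only makes $\Phi_{\overline P}(a_*V)$ bounded. An equivalence preserves perfect complexes, so it only tells you $E$ is perfect if $a_*V$ is.

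\emph{The conclusion is false, exactly at your ``main obstacle''.} Let $\nu:\tilde X\to X$ be the normalization, of genus $g-1$. Take $V=\nu_*\mathcal L$ with $\deg\mathcal L=e\geq 2g-2$; it has rank one, hence is stable, and $\deg V=e+1>2g-2$. Take $\alpha$ with $M:=a^{*}\overline P_\alpha\cong\nu_*\mathcal N$, $\deg\mathcal N=-1$. Such $\alpha$ exist since, as you say, $\alpha\mapsto a^{*}\overline P_\alpha$ identifies $A$ with $\overline J(X)$. Locally at the node, $\tilde R\otimes_R\tilde R$ has torsion of length $2$, and $(V\otimes M)/\mathrm{tors}\cong\nu_*(\mathcal L\otimes\mathcal N)$. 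Hence $\chi(V\otimes M)=(e-g+1)+2=\chi(V)+1$. Since $E$ is a sheaf and the higher $\mathcal{T}or$'s are skyscrapers at $a(p)$, $E\otimes k(\alpha)\cong H^{0}(X,V\otimes M)$, which has dimension $\chi(V)+1>\mathrm{rk}(E)$.

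So $E$ is not locally free, and the underived family $V\otimes a^{*}\overline P_\alpha$ is not flat over $A$. Your Riemann--Roch step $\chi(V\otimes M)=\deg V+\mathrm{rk}(V)(1-g)$ fails because degree is not additive for non-invertible $M$. The $H^{1}$-vanishing itself is still fine there: use $\mathrm{Hom}(V\otimes M,\omega_X)=\mathrm{Hom}(V,\mathcal{H}om(M,\omega_X))$, where $\mathcal{H}om(M,\omega_X)$ has rank one and degree $2g-2$. It is $\chi$ that jumps.

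What survives is what the paper's proof actually uses.
\begin{itemize}
\item[(1)] You only need $H^{j}(E\otimes^{L}k(\alpha))=0$ for $j>0$, and this does hold at every $\alpha$. The higher $\mathcal{T}or$'s are skyscrapers, so the positive-degree part of the fibre is just $H^{1}(X,V\otimes a^{*}\overline P_\alpha)=0$. Your top-degree Nakayama argument then kills $\mathcal H^{q}(E)$ for $q>0$. For $q<0$, $\mathcal H^{q}(E)=0$ simply because $E=Rp_{1*}$ of the sheaf $p_2^{*}a_*V\otimes\overline P$. The Tor-amplitude paragraph should be dropped.
\item[(3)] Compute at $\alpha\in J$, where $a^{*}\overline P_\alpha$ is a degree-zero line bundle. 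There your fibre argument is valid, and Riemann--Roch gives $E|_J$ locally free of rank $\chi(V)$. This is the paper's ``constant outside the singular locus''.
\item[(2)] Torsion-freeness cannot come from fibre dimensions. Local freeness on $J$ says nothing about subsheaves of $E$ supported on the boundary, so this needs a separate argument rather than global local freeness.
\end{itemize}
The paper's proof asserts the same degree-$0$ concentration for every $\alpha$, but it draws only $WIT_0$ and the generic rank from it.
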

     \begin{proof}
     Since for every $\alpha$ the derived fiber $E \otimes k(\alpha)$ is concentrated in degree $0$, we conclude that,\\
     \begin{equation*}
         R^{i}\Phi_{\overline P}(a_{*}V)=0 
     \end{equation*} for all $i>0$.
     Thus $a_{*}V$ satisfies $WIT_{0}$. Now in our situation we have $\overline P$ is flat with respect to projection and moreover,\\
     \begin{equation*}
         E \otimes k(\alpha) \cong h^{0}(X, V \otimes a^{*}\overline{P}_{\alpha})
     \end{equation*}
     Now $V$ is torsion free of fixed generic rank and each $\overline{P}_{\alpha}$ has generic rank $1$, So the dimension is constant outside the singular locus. Thus we can conclude that $E$ is torsion free of finite generic rank  and it is  precisely $\chi(V)$.
     \end{proof}
     \begin{corollary}
     If we have $\mu(V) >2g-2$ then,\\
     \begin{equation*}
         rk(E)=deg(V)+rk(V)(1-g)
     \end{equation*}
     \end{corollary}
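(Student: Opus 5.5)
The plan is to combine part (3) of the preceding Proposition, which gives $rk(E)=\chi(V)$, with Riemann--Roch for torsion free sheaves on the integral nodal curve $X$. The content of the Corollary is then the identity
\begin{equation*}
    \chi(V)=deg(V)+rk(V)(1-g),
\end{equation*}
where $g=p_{a}(X)$ and $deg(V)$ is defined by this formula or, equivalently, via $\chi$ of the determinant on the smooth locus.

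\textbf{Step 1.} First we would invoke the Proposition. Since $\mu(V)>2g-2$, the sheaf $a_{*}V$ is $WIT_{0}$ and $E$ has generic rank $\chi(V)$. Concretely, at a general $\alpha$ the fiber $E\otimes k(\alpha)$ is $H^{0}(X,V\otimes a^{*}\overline P_{\alpha})$. By the uniform vanishing lemma, $H^{1}$ vanishes, so this dimension equals $\chi(X,V\otimes a^{*}\overline P_{\alpha})$.

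\textbf{Step 2.} Next we show that $\chi(V\otimes M)=\chi(V)$ for $M\in\overline{Pic^{0}(X)}$. For general $\alpha$, $M=a^{*}\overline P_{\alpha}$ is a degree zero line bundle. This is clear either from flatness of $\overline P$ over the second factor, which makes $\chi$ locally constant on the connected $A$, or from the fact that tensoring by a degree $0$ line bundle preserves $\chi$ on an integral curve.

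\textbf{Step 3.} Finally we establish Riemann--Roch for a torsion free sheaf $V$ of rank $r$ on $X$. Choose an effective Cartier divisor $D$ supported in the smooth locus, for instance a multiple of a hyperplane section moved off the node. For $n\gg0$ this gives an exact sequence $0\to V\to V(nD)\to V(nD)|_{nD}\to0$, and $\chi$ changes by $r\cdot n\deg D$. This shows $\chi(V)-r\chi(\mathcal O_X)$ is additive in the degree. Defining $deg(V)=\chi(V)-r\chi(\mathcal O_X)$, which agrees with the usual degree for line bundles and with $\deg$ computed at the normalization up to the standard correction at the node, and using $\chi(\mathcal O_X)=1-p_{a}(X)=1-g$, we obtain the formula.

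The main obstacle is Step 3's compatibility of conventions. At the node, a torsion free non-locally-free $V$ (for example $\mathfrak m_p$) has $\chi$ differing from that of a line bundle of the "naive" degree. So we must fix $deg$ as the one defined by $\chi(V)-r(1-g)$, equivalently $\deg$ of $\det$ in Seshadri's sense. Checking that this agrees with the slope convention used in the uniform vanishing lemma is the delicate point. Once conventions are fixed, the Corollary follows immediately from the Proposition.
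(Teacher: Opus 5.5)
Your proposal is correct and is essentially the paper's argument: the paper also combines $rk(E)=\chi(V)$ (from the Proposition, via the uniform vanishing of $H^{1}$) with Riemann--Roch for torsion free sheaves on the nodal curve, and your convention $deg(V)=\chi(V)-rk(V)(1-g)$ is the one it tacitly uses (it is consistent with $deg(K)=2g-2$ in the vanishing lemma). In Step 2, rely only on your second justification (tensoring by a degree-zero line bundle at a general $\alpha$): flatness of $\overline P$ alone does not make $\chi(V\otimes a^{*}\overline P_{\alpha})$ constant at the non-locally-free points, but only general $\alpha$ matters for the rank.
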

     \begin{proof}
     Since $H^{1}(X,V)=0$ and Reiman-Roch theorem is valid for trosion free sheaves on nodal curve we get the desired identity.
     \end{proof}
     The results of this section show that positivity of slope on the curve translates into strong structural properties of Fourier-Mukai and on compactified Jacobian. The next section is devoted in the study of reducing this from $WIT_{0}$ to $IT_{0}$.
     \section{From $WIT_{0}$ to $IT_{0}$}
     In the previous section we proved that $E=\Phi_{\overline P}(a_{*}V)$ is generically locally free and satisfies $WIT_{0}$. We now produce an explicit torsion free sheaf on Jacobian which satisfies $IT_{0}$.
     \subsection{Reduction to Cohomology on the curve}
     Let $\alpha \in \overline{Pic^{0}(A)}$. By definition of the Fourier-Mukai transform and derived base change, we have\\
     \begin{equation*}
         E \otimes^{L} k(\alpha) \cong R\Gamma(A,a_{*}V \otimes \overline{P}_{\alpha}) 
     \end{equation*}
     using projection formula we get,\\
     \begin{equation*}
         E \otimes^{L} k(\alpha) \cong R\Gamma(X, V \otimes a^{*}\overline{P}_{\alpha})
     \end{equation*}
     Now taking cohomology we get that,\\
     \begin{equation*}
         H^{i}(A, E \otimes \alpha) \cong H^{i}(C, V \otimes a^{*}\overline P_{\alpha})
     \end{equation*}
     since the pull back of poincare sheaf belongs to the degree zero component of $X$, this reduces the $IT_{0}$ condition for $E$ to uniform vanishing statements on the curve $X$.
     Now are ready to state our main theorem\\
     \begin{theorem}
     Let $V$ be a semistable torsion free sheaf on $X$, with $\mu(V)>2g-2$. Then $E=\Phi_{\overline{P}}(a_{*}V)$ is a torsion free sheaf satisfying $IT_{0}$ property.
     \end{theorem}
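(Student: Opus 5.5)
The plan is to split the statement into two claims: $E$ is a torsion free sheaf, and $H^{i}(A,E\otimes\alpha)=0$ for all $i>0$ and all $\alpha\in\overline{Pic^{0}(A)}$. The first claim is already contained in the Proposition of the previous section. Since $\mu(V)>2g-2$, $a_{*}V$ satisfies $WIT_{0}$, so $E=\Phi_{\overline P}(a_{*}V)$ is a genuine sheaf in degree $0$. It is torsion free of generic rank $\chi(V)=\deg(V)+rk(V)(1-g)$ by the Corollary. So the real content is the vanishing, and the route is the reduction recorded just before the theorem: via derived base change and the projection formula for the closed immersion $a$,
\begin{equation*}
H^{i}(A,E\otimes\alpha)\cong H^{i}(X,V\otimes a^{*}\overline P_{\alpha}).
\end{equation*}
This reduces $IT_{0}$ for $E$ to a vanishing statement on the curve $X$.

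The key steps, in order, are as follows. First, for each $\alpha$ I will identify $M_{\alpha}:=a^{*}\overline P_{\alpha}$ as an element of $\overline{Pic^{0}(X)}$. It is a rank one torsion free sheaf of degree $0$ on $X$: autoduality of the compactified Jacobian gives $\overline P_{\alpha}$ restricted to the Abel--Jacobi image as the degree-zero sheaf corresponding to $\alpha$. This is the ``degree zero component'' assertion already used in the text. Second, I will check that the vanishing on $X$ only needs to be proved for $i=1$. Indeed $X$ is a curve, so $H^{i}(X,-)=0$ for $i\ge 2$ automatically; correspondingly $H^{i}(A,E\otimes\alpha)=0$ for $i\geq 2$. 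Third, I will apply the uniform $H^{1}$-vanishing Lemma to $V$ and $M=M_{\alpha}$. Semistability of $V$ together with $\mu(V)>2g-2$ gives $H^{1}(X,V\otimes M_{\alpha})=0$ for every $\alpha$. Combining the three steps gives $H^{i}(A,E\otimes\alpha)=0$ for all $i>0$ and all $\alpha$, which is $IT_{0}$.

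The main obstacle is justifying the isomorphism between $H^{i}(A,E\otimes\alpha)$ and the cohomology of the derived fiber, uniformly in $\alpha$. Derived base change gives $E\otimes^{L}k(\alpha)\cong R\Gamma(X,V\otimes M_{\alpha})$, which identifies fibers of $E$, not the cohomology of twists of $E$. To pass to $H^{i}(A,E\otimes\alpha)$, I would first use Mukai-type duality for the equivalence $\Phi_{\overline P}$ (Theorem C of \cite{Dima}): $E\otimes\alpha$ corresponds under the transform to a translate of $a_{*}V$. I would then compute its cohomology via the inverse transform, so that the relevant groups become $H^{i}$ of $V$ twisted by a degree zero rank one sheaf on $X$. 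If this identification is delicate near the boundary of $\overline{Pic^{0}}$, the fallback is semicontinuity. The fiber computation shows $\dim H^{0}(X,V\otimes M_{\alpha})=\chi(V)$ is constant in $\alpha$, with $H^{1}=0$ everywhere. Grauert's theorem then shows the complexes are locally free in degree $0$ in families, and this constancy is what propagates the vanishing to the twists. A second, smaller point to check is that $V\otimes M_{\alpha}$ stays semistable when $M_{\alpha}$ is not locally free, i.e. when it is supported at the node. Here I would replace $\otimes$ by its torsion free quotient, which only decreases $H^{1}$, and then the Lemma applies as stated.
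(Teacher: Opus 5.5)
\textbf{There is a genuine gap, and it cannot be filled: the theorem as stated is false.}

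Your outline is the paper's own argument. The step you single out as the ``main obstacle'' is exactly where it breaks, and it breaks in the paper too: the paper's proof just asserts $H^{i}(A,E\otimes\alpha)\cong H^{i}(X,V\otimes a^{*}\overline P_{\alpha})$ by ``taking cohomology'' of the derived fibre. You are right that derived base change only computes $E\otimes^{L}k(\alpha)$. Combined with the vanishing Lemma, it shows that $a_{*}V$ is $IT_{0}$ and that $E$ is a sheaf of rank $\chi(V)$, and nothing more.

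Neither of your repairs reaches the twists of $E$. Suppose you compute $R\Gamma(A,E\otimes\overline P_{\beta})$ by base change for the transform of $E$. You get the fibre $\Phi_{\overline P}(E)\otimes^{L}k(\beta)=\Phi_{\overline P}^{2}(a_{*}V)\otimes^{L}k(\beta)$. The inversion theorem then applies: $\Phi^{2}\cong(-1)^{*}[-g]$ in Mukai's smooth setting, and for $\overline J$ the same holds with $(-1)^{*}$ replaced by the involution $\iota\colon F\mapsto F^{\vee}$. It yields
\[
H^{i}(A,E\otimes\overline P_{\beta})\cong\mathrm{Tor}^{\mathcal O_{A}}_{g-i}\bigl(a_{*}V,k(\iota\beta)\bigr).
\]
These are local Tor groups of a sheaf supported on a curve of codimension $g-1$. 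They are not groups of the form $H^{i}(X,V\otimes M)$. The transform exchanges ``fibre'' and ``cohomology'': $H^{i}(X,V\otimes a^{*}\overline P_{\alpha})$ is the fibre of $E$, not the cohomology of a twist of $E$. Your Grauert fallback only re-proves that $E$ is locally free of rank $\chi(V)$. Constancy of $h^{0}(X,V\otimes M_{\alpha})$ says nothing about $H^{i}(A,E\otimes\overline P_{\beta})$.

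The $IT_{0}$ claim fails directly. By the formula above, $H^{g}(A,E\otimes\overline P_{\beta})\cong V\otimes k(\iota\beta)\neq0$ whenever $\iota\beta\in a(X)$. In fact $H^{i}\neq0$ for all $1\le i\le g$ at such points.

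Here is a check that does not depend on the precise form of the inverse. Take $g=1$ (a nodal cubic, which the hypotheses allow), so $a$ is an isomorphism. Take $V$ locally free of rank $r$ and degree $d>0$.
\begin{enumerate}
\item By the projection formula, $R\Gamma(A,E)\cong R\Gamma\bigl(A,a_{*}V\otimes^{L}Rp_{2*}\overline P\bigr)$.
\item By the standard computation, $Rp_{2*}\overline P\cong k(0)[-g]$, and here $0=a(p)$ for a smooth point $p$. Hence $R\Gamma(A,E)\cong k^{r}[-1]$ and $\chi(E)=-r$.
\item Riemann--Roch then gives $\chi(E\otimes\alpha)=-r<0$ for every degree-zero line bundle $\alpha$, so $H^{1}(A,E\otimes\alpha)\neq0$.
\end{enumerate}
This is the familiar elliptic-curve fact that the transform of a positive-degree semistable bundle has negative degree and is $IT_{1}$.

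What your argument, and the paper's, actually establishes is that $a_{*}V$ is $IT_{0}$ and that $E$ is the associated Picard-type bundle. That bundle is $WIT_{g}$, not $IT_{0}$.
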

     \begin{proof}
     As the cohomology for $i\geq 2$ vanishes for the curve $X$ we have to show that $H^{1}(C, V\otimes a^{*}\overline P_{\alpha})=0$. But this also follows from the lemma of uniform vanishing.
     \end{proof}

     \begin{corollary}
     The $IT_{0}$ sheaf constructed above is Giesker stable if $V$ is stable.
     \end{corollary}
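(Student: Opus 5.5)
The plan is to use the equivalence $\Phi_{\overline P}$ of \cite[Theorem C]{Dima} to move the stability question back to $X$, where we know $V$ is stable. Since $a_{*}V$ satisfies $WIT_{0}$ by the Proposition, $E=\Phi_{\overline P}(a_{*}V)$ is a sheaf concentrated in degree $0$. By the Theorem, $E$ is torsion free, hence pure of dimension $g=\dim A$. So Gieseker stability reduces to the inequality $p(F)<p(E)$ for saturated proper subsheaves $0\neq F\subset E$.

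I would argue by contradiction. Suppose $F\subset E$ is a saturated destabilizing subsheaf, so $p(F)\geq p(E)$, and let $Q=E/F$, which is torsion free.
\begin{enumerate}
\item Since $E$ is $IT_{0}$, the quotient $Q$ inherits vanishing of the top cohomology twisted by every $\alpha$. I would show, or impose by passing to the maximal destabilizing subsheaf, that $F$ and $Q$ are themselves $WIT$ objects of a controlled index.
\item Apply the inverse transform $\Phi_{\overline P}^{-1}$; by Arinkin this is again a Fourier--Mukai transform with kernel $\overline P^{\vee}$ up to shift and a sign involution. The triangle $F\to E\to Q$ then goes to a triangle whose middle term is $a_{*}V$ up to shift.
\item Taking the long exact cohomology sequence, $a_{*}V$ acquires a subobject or quotient supported on $a(X)$. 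This gives a subsheaf $W\subset V$ with $0<\operatorname{rk}W<\operatorname{rk}V$, or a nonzero torsion-free quotient of $V$.
\item Compare numerical invariants. By the Corollary, $\operatorname{rk}\widehat{W}=\chi(W)=\deg W+\operatorname{rk}W(1-g)$, and Riemann--Roch holds on the nodal curve. The leading coefficients of the Hilbert polynomials of $F$ and $E$ are therefore $\chi(W)$ and $\chi(V)$.
\item Stability of $V$ gives $\mu(W)<\mu(V)$. Since $\mu(V)>2g-2$, both $\chi(W)$ and $\chi(V)$ are positive, and the inequality yields $\chi(W)/\operatorname{rk}W<\chi(V)/\operatorname{rk}V$. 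I would then translate this into a comparison of reduced Hilbert polynomials to contradict $p(F)\geq p(E)$.
\end{enumerate}

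The main obstacle is step (1)--(3): controlling the WIT behaviour of an arbitrary subsheaf $F$ of $E$, so that the transform of $F$ is a sheaf (up to shift) on $a(X)$ rather than a complex with several cohomology sheaves.

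First I would try to show that every quotient $Q$ of an $IT_{0}$ sheaf has $H^{g}(Q\otimes\alpha)=0$. Then I would use Mukai-type arguments, valid here by the base change and Cohen--Macaulay property of $\overline P$, to force the transform of $Q$ to live in degrees $\leq$ some bound. Working in the derived category, the image of the heart under $\Phi_{\overline P}^{-1}$ should be a tilt. The subobject $F$ then corresponds to a subobject of $a_{*}V$ in that tilted heart, and such subobjects are sheaves pushed forward from $X$ because $a_{*}V$ is a sheaf supported on the curve.

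A second subtlety is that the reduced Hilbert polynomials of $F$ and $E$ on $A$ are determined by more than the ranks $\chi(W),\chi(V)$. If the rank comparison from step (5) is strict, the leading terms already decide the inequality. Otherwise I would compute the next coefficient via Grothendieck--Riemann--Roch for the transform; on the singular $A$ this has to be done with a smooth or lci replacement.
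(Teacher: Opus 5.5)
Your outline follows the paper's own strategy: pull a destabilizing subsheaf back through $\Phi_{\overline P}^{-1}$, land in a subsheaf of $V$, and contradict stability of $V$. The step you flag as the main obstacle is a genuine gap, and the paper does not close it either. It simply asserts that, because $\Phi_{\overline P}$ is an equivalence, $\Phi_{\overline P}^{-1}(E')\subset a_*V$ is a subsheaf supported on the curve, with $p(\Phi_{\overline P}^{-1}(E'))\geq p(a_*V)$. An exact equivalence preserves triangles, not the standard heart. So $F\hookrightarrow E$ becomes a monomorphism $\Phi_{\overline P}^{-1}(F)\to a_*V$ only in the heart $\mathcal{B}=\Phi_{\overline P}^{-1}(\mathrm{Coh}(A))$, and $\Phi_{\overline P}^{-1}(F)$ is in general a complex.

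The long exact sequence in your step (3) does produce some subsheaf $a_*W\subset a_*V$ (the image of a cohomology sheaf of $\Phi_{\overline P}^{-1}(F)$). But it may be $0$ or all of $a_*V$. Step (4) then tacitly assumes $F\cong\widehat W$, i.e.\ $\Phi_{\overline P}^{-1}(F)\cong a_*W$, which is exactly what is unproven.

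Your suggested repairs do not provide this. $IT_0$ does give $H^g(A,Q\otimes\alpha)=0$ for every quotient $Q$, but that removes only one extreme cohomology sheaf of the transform. It does not make $F$ or $Q$ satisfy $WIT$, and nothing forces a maximal destabilizing subsheaf to. For $g\ge 2$ the heart $\mathcal{B}$ is not a tilt of $\mathrm{Coh}(A)$. It contains $k(\alpha)=\Phi_{\overline P}^{-1}(\overline P_\alpha)$ in degree $0$, and, already for Mukai's transform on a Jacobian, $\mathcal{O}[g]$ in degree $-g$.

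Finally, your reason that subobjects of $a_*V$ in $\mathcal{B}$ are pushed forward from the curve (``because $a_*V$ is supported on the curve'') would apply to every subobject, and it fails. Take a smooth curve $C$ with $g\ge 2$ and a theta divisor $\Theta$ with class $\theta$. Then $E(-m\Theta)\subset E$ gives a subobject of $a_*V$ in $\mathcal{B}$ that is not of the form $a_*W$ for any $W\in D^b(C)$. Indeed, $\mathrm{ch}(\Phi_P(a_*W))\in H^0\oplus H^2$, since the cohomological transform sends $H^{k}$ to $H^{2g-k}$. By contrast, $\mathrm{ch}(E(-m\Theta))=\mathrm{ch}(E)e^{-m\theta}$ has $H^4$-component $\tfrac{m^2}{2}\chi(V)\theta^2-m\,\mathrm{ch}_1(E)\theta\neq 0$ for $m\gg 0$. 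So there is no mechanism turning a destabilizing $F\subset E$ into a subsheaf of $V$; a proof has to control subsheaves of $E$ on $A$ directly.

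The numerics in steps (4)--(5) also need correcting. Reduced Hilbert polynomials are normalized by their leading coefficient. So for torsion-free $F\subset E$ on $A$, the leading terms of $p(F)$ and $p(E)$ always coincide and can never decide the inequality; the first coefficient that can is the slope $c_1\cdot H^{g-1}/\operatorname{rk}$. Your inequality $\chi(W)/\operatorname{rk}W<\chi(V)/\operatorname{rk}V$ compares a rank on $A$ with a rank on $X$, and it enters only through $c_1(\widehat W)$. In the smooth case $c_1(\widehat W)=-\operatorname{rk}(W)\,\theta$, so $\mu_\theta(\widehat W)=-g!\operatorname{rk}(W)/\chi(W)$. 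Your inequality then does yield $\mu_\theta(\widehat W)<\mu_\theta(\widehat V)$, provided $\chi(W)>0$. Contrary to step (5), that positivity does not follow from $\mu(V)>2g-2$ for an arbitrary subsheaf $W\subset V$. This computation only becomes useful once the reduction in steps (1)--(3) exists, and in the nodal case it also requires a Riemann--Roch computation on the singular $A$.
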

     \begin{proof}
         In \cite{Dima} it is shown that the Fourier-Mukai Functor $\Phi_{\overline{P}}$ has a quasi inverse. Denote this by $\phi_{\overline{P}}^{-1}$. Now consider a destabibizing subsheaf $E'$ of $E$ with $p(E')>p(E)$. Now applying the Fourier-Mukai inverse, as it is a categorical equivalence we get $\Phi_{\overline{P}}^{-1}(E') \subset a_{*}V $. Lets call it $V'$. Again as Fourier- Mukai is a categorical equivalence it preserves cohomology. so we have that $p(V')\geq p(a_{*}V)$.  Now as $a_{*}V$ is supported on the curve itself $V'$ must also be supported on the curve. Thus we can identify $V'$ with a sheaf on $C$ and that sheaf must be subsheaf of $V$. So as $V$ is slope stable it is Gieseker-stable. Thus it is a contradiction. So $E$ is Gieseker stable if $V$ is stable.   
     \end{proof}
     \begin{corollary}
         if $V$ is stable then $E$ is Gieseker stable.
     \end{corollary}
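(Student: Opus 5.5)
The statement is a restatement of the preceding corollary, so the plan is to reduce it to that argument and make each step explicit. First, stability of $V$ gives the hypotheses of the main theorem. $V$ is stable, hence semistable. We are working under the standing assumption $\mu(V)>2g-2$ of this section. So the theorem applies, and $E=\Phi_{\overline P}(a_*V)$ is a torsion free $IT_0$ sheaf; in particular $E$ is pure. By the proposition on $WIT_0$, $E$ is a genuine sheaf sitting in degree $0$, so the notion of a subsheaf of $E$ makes sense. This leaves only the inequality on reduced Hilbert polynomials to check.

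Next I will argue by contradiction. Suppose $E'\subset E$ is a proper saturated subsheaf with $p(E')\geq p(E)$. I will apply the quasi-inverse $\Phi_{\overline P}^{-1}$ from \cite{Dima}, which is a categorical equivalence, to the monomorphism $E'\hookrightarrow E$. This produces a nonzero morphism $V'':=\Phi_{\overline P}^{-1}(E')\to a_*V$ in $D^b(A)$.

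The key steps are then as follows.
\begin{enumerate}
\item Show that $E'$ also satisfies $WIT_0$, or at least that $V''$ is a sheaf. I would use the long exact sequence of $0\to E'\to E\to E/E'\to 0$ together with $E$ being $IT_0$. This gives $H^i(A,E'\otimes\alpha)=0$ for $i\geq 2$.
\item Identify the image $V'$ of $V''\to a_*V$. It is supported on $a(X)$, so it equals $a_*W$ for a subsheaf $W\subset V$.
\item Compare reduced Hilbert polynomials. Here I would use the cohomology comparison $H^i(A,E\otimes\alpha)\cong H^i(X,V\otimes a^*\overline P_\alpha)$ from the reduction to curve cohomology, translating $p(E')\geq p(E)$ into $\mu(W)\geq\mu(V)$.
\item Conclude. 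On a curve, slope stability of $V$ forbids $\mu(W)\geq\mu(V)$ for a proper nonzero $W$, so no such $E'$ exists.
\end{enumerate}

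The main obstacle is step 3, together with the $H^1$ term that step 1 leaves uncontrolled. An equivalence of derived categories preserves morphisms, but it does not preserve monomorphisms of sheaves. Nor does it carry Hilbert polynomials on $A$ to Hilbert polynomials on $X$: the Hilbert polynomial of $E'$ involves twists by an ample class on $A$, not by elements of $\overline{Pic^0}$. My first attempt would be to replace $p$ by Euler characteristics against the family $\overline P_\alpha$, using $\chi(E'\otimes\alpha)=\operatorname{rk}$ data transported by $\Phi$. If that fails, I would restrict to the case where $E'$ is itself $WIT_0$, for instance by saturating and twisting. In that case $\Phi^{-1}(E')$ is a sheaf quotient-compatible with $a_*V$, and ranks on $A$ correspond to Euler characteristics $\chi(W)=\deg W+\operatorname{rk}W(1-g)$ on $X$, as in the corollary computing $\operatorname{rk}(E)$. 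The slope inequality then follows from a linear comparison of $\chi/\operatorname{rk}$.
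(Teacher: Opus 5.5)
\textbf{There is a genuine gap: the central step of your outline is never carried out, and the paper does not supply it either.}

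Your plan is the paper's own argument. The paper proves this corollary by pointing to the preceding one. That proof applies $\Phi_{\overline P}^{-1}$ to a destabilizing $E'\subset E$, asserts $\Phi_{\overline P}^{-1}(E')\subset a_*V$, asserts $p(V')\geq p(a_*V)$ because the equivalence ``preserves cohomology'', and then contradicts stability of $V$.

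The gap you flag in steps 1--3 is real, and citing the paper does not close it. Those two assertions are exactly what is missing, and neither follows from $\Phi_{\overline P}$ being an equivalence.
\begin{itemize}
\item A derived equivalence does not preserve the standard heart. $\Phi_{\overline P}^{-1}(E')$ is in general a complex. The morphism $\Phi_{\overline P}^{-1}(E')\to a_*V$ induced by $E'\hookrightarrow E$ need not be injective on any cohomology sheaf. So the $W\subset V$ of your step 2 need not be a proper nonzero subsheaf, let alone a destabilizing one.
\item Nor does it preserve Hilbert polynomials, since it mixes the components of the Chern character. Already $\operatorname{rk}E=\chi(V)$. Also $\Phi_{\overline P}(k(x))=\overline P_x$ has a Hilbert polynomial of degree $g$, while that of $k(x)$ is constant.
\item Step 1 is incorrect as written. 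The sequence $0\to E'\to E\to E/E'\to 0$ and $IT_0$ for $E$ only give $H^i(A,E'\otimes\alpha)\cong H^{i-1}(A,(E/E')\otimes\alpha)$ for $i\geq 2$ (for locally free $\alpha$). Nothing forces these groups to vanish.
\item Step 1 also aims at the wrong condition. Vanishing of $H^i(A,E'\otimes\alpha)$ governs $\Phi_{\overline P}(E')$, not $\Phi_{\overline P}^{-1}(E')$. In the abelian-variety model $\Phi^{-1}\cong(-1)^*\Phi[g]$, so $\Phi^{-1}(E')$ is a sheaf exactly when $E'$ is $WIT_g$, not $WIT_0$.
\end{itemize}

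The fallbacks in your last paragraph do not repair this.
\begin{itemize}
\item For $\alpha$ algebraically equivalent to zero, $\chi(E'\otimes\alpha)=\chi(E')$ is only the constant term of the Hilbert polynomial. Stability is decided by its leading coefficients with respect to an ample class $H$ on $A$, that is, by $\operatorname{rk}E'$ and $c_1(E')\cdot H^{g-1}$.
\item Your $\chi/\operatorname{rk}$ comparison only applies to sub-objects that already come from subsheaves of $V$, such as $\Phi_{\overline P}(a_*W)\subset E$ for a $WIT_0$ subsheaf $W\subset V$. A general saturated $E'\subset E$ is not of this form.
\item Twisting $E'$ by an ample line bundle to force a $WIT$ condition changes the objects and does not commute with $\Phi_{\overline P}$. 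It therefore breaks the link with $a_*V$.
\end{itemize}

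Showing that destabilizing subsheaves must come from $V$ is essentially the whole content of the statement, and it is not formal. Already for $V$ a line bundle of degree $\geq 2g-1$ on a smooth curve, $E$ is (a translate of) a Picard bundle. Its stability with respect to $\Theta$ is a theorem of Kempf and of Ein--Lazarsfeld, proved by geometric arguments rather than from the equivalence. So your proposal, like the argument it reconstructs, does not prove the corollary. The missing ingredient is an actual mechanism that controls all subsheaves of $E$.
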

     \begin{proof}
     In the previous result we have shown that if $V$ is stable then $E$ is Gieseker stable. Now over any projective scheme Gieseker stable bundles are simple\cite[corollary 1.2.8]{Lehn}. Thus $E$ is simple. However simplicity of $E$ can be shown independently also using that $V$ is simple and $\phi_{\overline{P}}$ is a categorical equivalence.
     \end{proof}
     
      \begin{remark}
         Note that ,without loss of generality we can assume the bundle $V$ to be stable, as from Jordan-Holder filtration of of semistable bundles ,we can always produce a stable bundle of same slope which arise as quotient from the filtration.
      \end{remark}
      \begin{remark}
      All the constructions we have done here essentially holds in the smooth curve case. There we get a locally free sheaf after Fourier-Mukai transform and we can work with Jacobian itself which is a abelian variety. We hope that the existence of stable simple $IT_{0}$ torsion free sheaf of fixed rank will be helpful to study the moduli-space of them on compcatified jacobian and in smooth case on jacobian.
      \end{remark}

\end{document}